\documentclass[11pt]{article}

\usepackage{amssymb}
\usepackage{amsmath}
\usepackage{graphics}
\usepackage{color}
\usepackage{hyperref}

\newcommand{\eps}{\varepsilon}

\newtheorem{thm}{Theorem}
\newtheorem{cor}[thm]{Corollary}

\newtheorem{lemma}[thm]{Lemma}
\newtheorem{definition}[thm]{Definition}

\newtheorem{remark}[thm]{Remark}
\newenvironment{proof}{\noindent\bf{Proof.}\rm}{\hfill$\blacksquare$\bigskip}

\newcommand{\p}{\mathbb{P}} 
\renewcommand{\P}{\mathbb{P}} 

\begin{document}

\title{Finding cliques using few probes}

\author{
		Uriel Feige
        \thanks{Weizmann Institute of Science; \texttt{uriel.feige@weizmann.ac.il}. Research supported in part by the Israel Science Foundation (grant No. 1388/16).}
        \and 
        David Gamarnik 
        \thanks{MIT; \texttt{gamarnik@mit.edu}.
        Research supported in part by ONR grant N00014-17-1-2790} 
        \and 
        Joe Neeman 
        \thanks{UT Austin; \texttt{joeneeman@gmail.com}. Research supported in part by the Alfred P. Sloan Foundation.} 
        \and 
        Mikl\'os Z.\ R\'acz 
        \thanks{Princeton University; \texttt{mracz@princeton.edu}. Research supported in part by NSF grant DMS 1811724.}
        \and 
        Prasad Tetali 
        \thanks{Georgia Tech; \texttt{tetali@math.gatech.edu}. Research supported in part by NSF grants DMS-1407657 and DMS-1811935.}
}

\maketitle

\begin{abstract}
Consider algorithms with unbounded computation time that probe the entries of the adjacency matrix of an $n$ vertex graph, and need to output a clique. We show that if the input graph is drawn at random from $G_{n,\frac{1}{2}}$ (and hence is likely to have a clique of size roughly $2\log n$), then for every $\delta < 2$ and constant $\ell$, there is an $\alpha < 2$ (that may depend on $\delta$ and $\ell$) such that no algorithm that makes $n^{\delta}$ probes in $\ell$ rounds is likely (over the choice of the random graph) to output a clique of size larger than $\alpha \log n$. 
\end{abstract}

\section{Introduction}\label{sec:intro}

Consider an algorithm (with unlimited computation time) that may make up to $q = n^{\delta}$ adaptive probes, to be defined later, $1 \le \delta < 2$, to the adjacency matrix of an input graph drawn randomly from $G_{n,\frac{1}{2}}$, and needs to return a clique. What is the largest value of $\alpha$ (as a function of $q$, or of $\delta$) such that the size of the output clique is at least $\alpha \log n$ with probability at least $\frac{1}{2}$ (over the choice of the random graph)? (All logarithms are in base~2.) Observe that necessarily $\alpha \le 2$, because with high probability the size of the largest clique in the input graph is roughly $2\log n$ (see Remark~\ref{rem:matula}).

\begin{remark}
The set of $q$ probes involves at most $2q$ vertices. If $2q < n$, then the algorithm may w.l.o.g.\ remove from the input graph $n - 2q$ vertices prior to making any probes (since the graph is random, it does not matter which vertices are removed), and consequently the assumption that $\delta \ge 1$ is without loss of generality. For $\delta = 2$ the algorithm can probe all entries in the adjacency matrix, and determine the largest clique. Hence we may also assume that $\delta < 2$.
\end{remark}

As a motivation for the types of algorithms considered in this paper, consider the following algorithm for finding a clique in the random graph. Run the greedy algorithm (iteratively choosing an arbitrary vertex and removing all its non-neighbors) until $2^{c\sqrt{\log n}}$ vertices remain, for some choice of constant $c$. Then exhaustively search among these for a clique of size $2c\sqrt{\log n}$. This gives a clique of size $\log n+c\sqrt{\log n}$ in time roughly $n^{2c^2}$.  
It is easy to see that this algorithm inspects $O(n)$ out of $O(n^2)$ total edges.

In the probe model, we allow the algorithms to run in superpolynomial time, but restrict the number of edges the algorithm is allowed to inspect. Specifically, we consider algorithms evolving dynamically over a certain fixed $T$ number of steps. In the first step $t=1$, the algorithm is allowed to choose any pair $(i_1,j_1), 1\le i_1 < j_1 \le n$, and asks the status of this pair, namely whether it is an edge or not. Depending on the outcome, the algorithm selects a second pair $(i_2,j_2), 1\le i_2<j_2\le n$, and asks the status of the pair (edge or not); it then selects $(i_3,j_3)$, and so on. The algorithm runs till $t=T$, may use unbounded computational time, and needs to produce a clique as large as possible. 

If $T=O(n)$, then such an algorithm can produce a clique of size $\frac{3}{2}\log n$ as follows. Run the greedy algorithm until $\sqrt{q}$ vertices remain and thereafter switch to exhaustive search. This gives a clique of size $\log \frac{n}{\sqrt{q}} + 2\log \sqrt{q} = \log n + \tfrac{1}{2} \log q$.  When $q = \Theta(n)$ the algorithm finds a clique of size $\frac{3}{2}\log n$ while inspecting $T=O(n)$ edges. In general, when $q = \Theta(n^{\delta/2})$ and $\delta\ge 1$, the algorithm finds a clique of size $(1 + \delta / 2) \log n$ while inspecting $T=O(n^{\delta})$ edges.

It is an interesting question whether there is any probing type algorithm that finds a clique larger than  $(1+\delta/2)\log n$ with $T=O(n^{\delta})$. Not being able to make progress on this question, we consider the following modification of this question: what happens if the adaptiveness of the algorithm is limited, in the sense that the algorithm is run in a limited number of stages? Specifically, we consider algorithms which can probe the entries of the adjacency matrix in a constant number of rounds, where all probes of a round are done in parallel. Precisely, fix a constant $\ell$. In step $i=1$ the algorithm selects a subset $E_1$ of the set of pairs $(i,j), 1\le i<j\le n$, and the status of all edges in $E_1$ is revealed. Based on the outcome a set $E_2$ of pairs is selected by the algorithm and the status of edges in $E_2$ is revealed. This is continued until at most $\ell$ sets $E_1,\ldots,E_\ell$ are created. The algorithm is limited by having 
\begin{align*}
\sum_{1\le i\le \ell}|E_i|\le T.
\end{align*}
When $T=O(n^{\delta})$, this is effectively a bound $|E_i|=O(n^{\delta})$, since $\ell=O(1)$.

Define $\alpha_{\star} = \alpha_{\star} \left( \delta, \ell \right)$ as the supremum over $\alpha$ such that there exists an algorithm that probes at most $n^{\delta}$ entries of the adjacency matrix of a graph drawn at random from $G_{n,\frac{1}{2}}$, with the probes being done in $\ell$ rounds, and outputs a clique of size at least $\alpha \log n$ with probability at least $1/2$. In this paper we prove both lower and upper bounds on $\alpha_{\star}(\delta, \ell)$. Since the size of the largest clique in a random graph is approximately $2\log n$ (see Remark~\ref{rem:matula}), we immediately have that $\alpha_{\star}\leq 2$. Our main result is an upper bound that is strictly better than this.

\begin{thm}\label{thm:main}
For every $\delta < 2$ and constant $\ell$ we have that $\alpha_{\star} (\delta, \ell) < 2$. 
\end{thm}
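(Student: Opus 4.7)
The plan is to prove Theorem~\ref{thm:main} by bounding the expected number of $k$-cliques in the ``probed-and-present'' graph $L_\ell := H_\ell \cap G$, where $H_\ell$ denotes the set of edges probed through round $\ell$, and choosing $k = \alpha \log n$ with $\alpha < 2$ (depending on $\delta$ and $\ell$) so that this expectation is less than $1/2$. The first reduction is: any output clique $C$ of size $k$ with an unprobed edge in $E(C)$ is a true clique only with conditional probability $1/2$ per such unprobed edge, independently of the algorithm's view. Hence, up to a loss of constants in success probability, we may assume $E(C) \subseteq L_\ell$, so that $\Pr[\text{output is a } k\text{-clique}] \leq \Pr[L_\ell \text{ contains a } k\text{-clique}] \leq \E[\#\{k\text{-cliques in } L_\ell\}]$.

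The base case $\ell = 1$ is immediate: the probed graph $H_1$ is deterministic with $|H_1| \leq n^\delta$, so by Kruskal--Katona it has at most $\binom{m}{k}$ $k$-cliques with $m = O(n^{\delta/2})$. Since $L_1$ is a uniform $1/2$-subgraph of $H_1$, $\E[\#\{k\text{-cliques in } L_1\}] \leq \binom{m}{k} 2^{-\binom{k}{2}}$, which tends to $0$ for any $k > (\delta + o(1)) \log n$. Thus $\alpha_\star(\delta, 1) \leq \delta < 2$.

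For the inductive step I would fix the round-$1$ probes $E_1$ and condition on the response vector $r \in \{0,1\}^{|E_1|}$. Given $r$, the distribution of $G$ restricted to unprobed edges remains $G_{n,1/2}$, and rounds $2, \ldots, \ell$ form an $(\ell-1)$-round algorithm on this conditioned distribution. The subtlety is that a $k$-clique in $L_\ell$ can interleave edges from different rounds, combining edges revealed in round $1$ (forming a sub-clique of size at most roughly $\delta \log n$) with edges revealed in subsequent rounds. To handle this I would sum over partitions $E(C) = E_{\mathrm{old}} \sqcup E_{\mathrm{new}}$ of a candidate clique's edges by round, applying a Kruskal--Katona-type bound to the old-edge part and paying a factor $2^{-|E_{\mathrm{new}}|}$ for the new edges being present in $G$.

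The main obstacle is making the recursion preserve strict inequality: one needs $\alpha_{\ell+1} = g(\alpha_\ell, \delta) < 2$ whenever $\alpha_\ell < 2$ and $\delta < 2$. A naive worst-case Tur\'an-style bound on the number of edges of $L_\ell$ induced on a $k$-set saves only a factor of $2^{-\binom{k}{2}/(\alpha_\ell \log n)}$ per round, which is too weak relative to the $\binom{O(n^{\delta/2})}{k}$ enumeration of candidate cliques. The proof must therefore exploit the randomness of $L_\ell$ more carefully---for instance, by averaging over the distribution of $L_\ell$ rather than pessimizing, since for ``typical'' $k$-sets the induced subgraph of $L_\ell$ is much sparser than Tur\'an's bound would suggest. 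Carrying out this analysis quantitatively, likely via a decision-tree-style accounting that tracks both the algorithm's adaptivity and the underlying random-graph structure round by round, is the technical heart of the proof.
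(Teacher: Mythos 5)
Your proposal has two genuine gaps, and the second is the heart of the theorem. First, the opening reduction is false: you cannot assume, ``up to a loss of constants,'' that the output clique satisfies $E(C)\subseteq L_\ell$. An algorithm may output a $k$-set of which only a $\beta$-fraction of the $\binom{k}{2}$ pairs has been probed; it then succeeds with conditional probability $2^{-(1-\beta)\binom{k}{2}}$, which is superpolynomially small but is compensated by the fact that the number of $k$-sets that can be $\beta$-covered by $n^{\delta}$ probes is vastly larger than the number that can be fully covered (roughly $n^{(1-\sqrt{1-\beta})\delta k+(2\sqrt{1-\beta}-1)k}$ versus $n^{\delta k/2}$). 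The entire content of the bound is the trade-off between these two quantities as $\beta$ ranges over $[0,1]$; restricting to $\beta=1$ discards it, and the loss is not a constant factor. The paper handles this by decomposing the algorithm according to the first ``significant'' round at which the cumulative number of probes into the output set crosses a threshold $\sum_{j\le i}\beta_j\binom{k}{2}$, union-bounding over all ``$i$-eligible'' sets (eligibility being measurable with respect to the answers through round $i-1$), and crediting only the pairs still unprobed after round $i-1$.

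Second, you correctly identify the missing ingredient in your inductive step --- a count, for each coverage level $\beta$, of how many $k$-sets an adversarially placed set of $m=n^{\delta}$ probes can $\beta$-cover --- but you leave it unresolved and explicitly defer ``the technical heart of the proof.'' Kruskal--Katona, which you invoke for $\ell=1$, only addresses full coverage. The paper's key lemma (Theorem~\ref{thm:cover}) is the extremal bound $N_{n,m,k,\beta}\le m^{\mu+1}n^{k-2\mu+2}$ with $\mu=\min\{\sqrt{\beta}/2,\,1-\sqrt{1-\beta}\}\,k$, proved by showing that any $\beta$-covered $k$-set must contain a matching of size $\mu$ among the probed pairs (the extremal configurations being a clique and a complete split graph, identified via the Gallai--Edmonds decomposition) and then encoding the set by its matching edges plus leftover vertices. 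Without a lemma of this type there is no quantitative per-round gain and your recursion cannot be closed. Note also that the paper does not run an induction on rounds with conditioning on earlier answers: the significant-round decomposition treats all $\ell$ rounds in a single union bound, which entirely avoids the interleaving bookkeeping you describe, and the final optimization over the thresholds $(\beta_1,\dots,\beta_\ell)$ is what produces the explicit bound $2-\delta(1-\delta/2)^{\ell}$.
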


We first present algorithms that find cliques in a few rounds of probes ($\ell \in \left\{1,2,3\right\}$) in Section~\ref{sec:algorithms}, providing lower bounds on $\alpha_{\star}(\delta,\ell)$. We then prove Theorem~\ref{thm:main} in Section~\ref{sec:upper_bounds}. In fact, we prove an explicit upper bound as a function of $\delta$ and $\ell$: 
$\alpha_{\star} \left( \delta, \ell \right) \leq 2 - \delta \left( 1 - \delta / 2 \right)^{\ell}$, see Theorem~\ref{thm:main_quantitative}. 

A natural direction for future work is to improve the bounds in this paper. In particular, can one prove an upper bound (smaller than $2$) that holds regardless of the number of rounds $\ell$? Finally, it would be desirable to find the actual value of $\alpha_{\star}(\delta,\ell)$. 

\begin{remark}\label{rem:matula}
The size of the largest clique in a random graph is very well understood. Define 
$\omega_{n} = 2 \log n - 2 \log\log n + 2 \log e - 1$. 
Matula~\cite{matula1972} showed that for any $\varepsilon >0$, the clique number $\omega (G)$ of a random graph $G$ drawn from $G_{n,\frac{1}{2}}$ satisfies $\left\lfloor \omega_{n} - \varepsilon \right\rfloor \leq \omega (G) \leq \left\lfloor \omega_{n} + \varepsilon \right\rfloor$ with probability tending to $1$ as $n \to \infty$; see also~\cite{bollobas1976cliques}. To simplify exposition, in the following we will often neglect lower order terms and just state that the size of the largest clique is approximately $2 \log n$ with high probability (w.h.p.). In all such cases the arguments can be made precise by applying Matula's result; we leave the details to the reader. 
\end{remark}

\subsection{Related work}

The problem of finding structure in a random graph by adaptively querying the existence of edges between pairs of vertices was recently introduced by Ferber, Krivelevich, Sudakov, and Vieira~\cite{ferber2016finding,ferber2017finding}. In particular, they studied finding a Hamilton cycle~\cite{ferber2016finding} and finding long paths~\cite{ferber2017finding} in the adaptive query model. Conlon, Fox, Grinshpun, and He~\cite{CFGH18} also study this problem, which they term the Subgraph Query Problem, focusing on finding a copy of a target graph $H$, and in particular studying the case when $H$ is a small (constant size) clique. All of these works focus on sparse random graphs. 

In a similar vein, we study finding large cliques in an adaptive query model. The main difference between our work and those mentioned above---apart from the underlying random graph being dense or sparse---is that we study a query model where adaptiveness is limited to constantly many rounds of probes. It would be interesting to understand the effect of limited adaptiveness on finding other types of structures, such as a Hamilton cycle, long paths, or a particular target graph.

\section{Algorithms}\label{sec:algorithms} 

In this section we present algorithms that find cliques with one, two, and three rounds of probes. Interestingly, the algorithm for $\ell = 3$ rounds of probes described in Section~\ref{sec:three_rounds} matches (to leading order) the performance of the greedy algorithm presented in Section~\ref{sec:intro}, which corresponds to the {\em adaptive} probe model with no restriction on $\ell$ ($\ell = \infty$).  

\subsection{One round algorithm}

\begin{lemma}\label{lem:one_round}
For every $1\leq \delta < 2$ we have that $\alpha_{\star}(\delta,1) = \delta$. 
\end{lemma}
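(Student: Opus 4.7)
The plan is to match a simple one-round algorithm (for the lower bound) with a first-moment argument on the algorithm's output (for the upper bound). For the lower bound, I would use the non-adaptive algorithm that probes every pair inside a fixed set of $m = \lfloor \sqrt{2n^\delta}\rfloor$ vertices, uses at most $n^\delta$ probes, and (by brute force) outputs a maximum clique of the revealed subgraph. That revealed subgraph is distributed as $G_{m,1/2}$, whose clique number is $(\delta + o(1))\log n$ with high probability by Matula's theorem (Remark~\ref{rem:matula}); hence for every $\alpha < \delta$ the algorithm outputs a clique of size at least $\alpha \log n$ with probability tending to~$1$.

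For the upper bound, fix a one-round algorithm with deterministic probe set $H \subseteq \binom{[n]}{2}$, $|H| \leq n^\delta$, and output $S = \phi(G[H]) \in \binom{[n]}{k}$, where $k = \alpha \log n$ and $\alpha > \delta$. Writing $j(S) := |\binom{S}{2} \cap H|$, the independence of entries outside $H$ yields that the conditional probability $S$ is a clique in $G$, given $G[H]$, equals $\mathbf{1}[\text{all probed pairs of }S\text{ are edges}]\cdot 2^{j(S) - \binom{k}{2}}$. Maximizing over $S$ and taking expectation gives $\Pr[\text{success}] \leq \E[2^{j^\star - \binom{k}{2}}]$, where $j^\star := \max\{j(S) : S \in \binom{[n]}{k},\text{ all probed pairs of }S\text{ are edges of }G[H]\}$.

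To bound this expectation I would stratify on the deficiency $c := \binom{k}{2} - j(S)$. A pigeonhole argument (delete the at most $2c$ vertices of $S$ incident to a missing-from-$H$ pair) shows that any $k$-subset with $j(S) \geq \binom{k}{2} - c$ contains a $(k-2c)$-clique of $H$, while Kruskal--Katona / Moon--Moser bounds the number of $r$-cliques in a graph with $n^\delta$ edges by $\binom{m}{r}$ with $m := \lceil \sqrt{2n^\delta}\rceil$; thus the number of such subsets is at most $\binom{m}{k-2c}\binom{n}{2c}$, and first moment gives $\Pr[j^\star \geq \binom{k}{2} - c] \leq \binom{m}{k-2c}\binom{n}{2c}\cdot 2^{c-\binom{k}{2}}$. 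Splitting at the threshold $c^\star \asymp \frac{\alpha(\alpha-\delta)}{2(2-\delta)}\log n$ where this first-moment bound reaches $1$, the tail $\sum_{c > c^\star} 2^{-c}$ is $O(2^{-c^\star}) = n^{-\Omega(1)}$, and the remaining sum $\sum_{c \leq c^\star} 2^{-\binom{k}{2}}\binom{m}{k-2c}\binom{n}{2c}$ has $O(\log n)$ terms each $\leq 2^{-c^\star}$ by monotonicity of $\binom{m}{k-2c}\binom{n}{2c}$ on $[0, c^\star]$, so it is also $n^{-\Omega(1)}$. Hence $\Pr[\text{success}] \to 0 < 1/2$ for $\alpha > \delta$, giving $\alpha_\star(\delta, 1) \leq \delta$. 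The main obstacle is this threshold computation: one must verify that the leading $\log^2 n$ coefficient $\frac{\alpha}{2}(\delta - \alpha)$ in $\log_2[\binom{m}{k-2c}\binom{n}{2c}\cdot 2^{c - \binom{k}{2}}]$ is strictly negative -- equivalent to $\alpha > \delta$ -- and continues to dominate the positive linear-in-$c$ correction throughout $c \leq c^\star$.
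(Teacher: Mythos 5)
Your proposal is correct, and while the lower bound is exactly the paper's (probe a clique's worth of pairs on $\sqrt{2q}$ vertices and invoke Matula's theorem, Remark~\ref{rem:matula}), your upper bound takes a genuinely different route. The paper gets $\alpha_{\star}(\delta,1)\le\delta$ as the $\ell=1$ case of Theorem~\ref{thm:main_quantitative}: it restricts attention to ``eligible'' output sets all of whose pairs are probed, counts them via Definition~\ref{def:cover} and Theorem~\ref{thm:cover}, whose proof goes through maximum matchings and a Gallai--Edmonds structural analysis (Lemma~\ref{lem:matching}); for $\beta=1$ this yields at most $m^{k/2+1}n^{2}$ candidate sets, and a union bound with $2^{-\binom{k}{2}}$ finishes. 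You instead (i) write the success probability as $\E\bigl[2^{j^{\star}-\binom{k}{2}}\bigr]$, which transparently accounts for the possibility that the algorithm outputs a set with unprobed pairs and gambles on them, and (ii) count sets of deficiency $c$ by deleting the $\le 2c$ endpoints of missing pairs and applying Kruskal--Katona to the probe graph, giving $\binom{m}{k-2c}\binom{n}{2c}$ with $m\approx\sqrt{2q}$. Your step (i) is arguably cleaner than the paper's ``significant round'' device, which at $\ell=1$ silently assumes the output set is fully probed; your step (ii) replaces the matching/Gallai--Edmonds machinery with a shorter standard tool, at the cost of being specific to $\beta$ near $1$ (the paper's Theorem~\ref{thm:cover} is needed in full for general $\ell$, where intermediate $\beta$ arise and the split-graph extremal case matters). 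The stratification over $c$ and the threshold $c^{\star}\asymp\frac{\alpha(\alpha-\delta)}{2(2-\delta)}\log n$ are right: the leading $\log^{2}n$ coefficient $\frac{\alpha}{2}(\delta-\alpha)$ is negative precisely when $\alpha>\delta$, and since $c^{\star}=\Theta(\log n)$ while the deficit is $\Theta(\log^{2}n)$, both halves of the sum are $n^{-\Omega(1)}$.

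One small piece of care when you write this out: to make every term with $c\le c^{\star}$ genuinely $n^{-\Omega(1)}$ (not merely $\le 1$), define $c^{\star}$ with a constant-factor slack below the exact critical value (e.g.\ half of it), so that $(2-\delta)c\log n$ eats at most half of $\frac{\alpha(\alpha-\delta)}{2}\log^{2}n$ throughout the range; the lower-order $O(\log n)$ corrections from $\binom{m}{k-2c}\le m^{k-2c}$ etc.\ are then harmless. Also note the monotonicity you invoke requires $2c^{\star}<k$, which holds exactly because $\alpha<2$.
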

\begin{proof}
Let $q=n^\delta$.
A simple one round algorithm probes all entries induced by $\sqrt{2q}$ vertices. This finds a clique of size approximately $2\log \sqrt{2q} \simeq \log q$ (w.h.p.). 

This is optimal up to lower order terms, due to the fact that in a complete graph, the pattern of $q$ probes that maximizes the number of complete subgraphs of any given size is a clique of size $\sqrt{2q}$. (See Section~\ref{sec:upper_bounds} for details on the upper bound.)
\end{proof}

\subsection{Two rounds}

\begin{lemma}\label{lem:two_rounds}
For every $1\leq \delta \le \frac{6}{5}$ we have that $\alpha_{\star}(\delta,2) \geq \frac{4\delta}{3}$. 
For every $\frac{6}{5} \le \delta  < 2$ we have that $\alpha_{\star}(\delta,2) \geq 1 + \frac{\delta}{2}$. 
\end{lemma}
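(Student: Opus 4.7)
The plan is to design a two-round algorithm parameterized by two exponents $\gamma, \omega \ge 0$, analyze it, and then choose $(\gamma, \omega)$ as the solution of a small linear program; the two regimes in the statement will correspond to different binding constraints of that program.

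Fix disjoint vertex sets $C, W \subseteq V$ with $|C| = c := \lceil n^\gamma \rceil$ and $|W| = w := \lceil n^\omega \rceil$. In round~1 the algorithm probes every edge inside $C$ together with every edge between $C$ and $W$, for a total of $\binom{c}{2} + cw = (1 + o(1)) n^{\gamma + \omega}$ probes. The induced graph $G[C]$ is distributed as $G(c, 1/2)$, so Remark~\ref{rem:matula} provides a clique $C' \subseteq C$ of size $(1 + o(1)) \cdot 2\gamma \log n$; fix any such $C'$. Because every edge between $C'$ and $W$ has already been probed, the algorithm can now identify the common neighborhood $W' := \{v \in W : v \text{ is adjacent to every vertex in } C'\}$; the events $\{v \in W'\}$ for $v \in W$ are independent Bernoulli with parameter $2^{-|C'|}$, so a Chernoff bound gives $|W'| = (1 + o(1)) \cdot n^{\omega - 2\gamma}$ with high probability.

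In round~2 the algorithm probes every edge inside $W'$, using $\binom{|W'|}{2} = (1 + o(1)) \cdot n^{2(\omega - 2\gamma)}/2$ probes. Since edges inside $W$ are independent of the $C$-to-$W$ edges that defined $W'$, the graph $G[W']$ is distributed as $G(|W'|, 1/2)$, and a second application of Remark~\ref{rem:matula} yields a clique $D \subseteq W'$ of size $(1 + o(1)) \cdot 2(\omega - 2\gamma) \log n$. The set $C' \cup D$ is a clique (every pair inside $C'$, inside $D$, and across $C' \times D$ has been probed and was adjacent), of size $(1 + o(1))(2\omega - 2\gamma) \log n$.

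The probe budget forces $\gamma + \omega \le \delta$ and $2(\omega - 2\gamma) \le \delta$, while the trivial bound $|W| \le n$ forces $\omega \le 1$. Maximizing the objective $2\omega - 2\gamma$ subject to these linear constraints yields two regimes: for $\delta \le 6/5$ the first two constraints are simultaneously tight at the optimum $(\gamma, \omega) = (\delta/6, 5\delta/6)$ (which satisfies $\omega \le 1$), giving $4\delta/3$; for $\delta \ge 6/5$ the constraint $\omega \le 1$ becomes active, the optimum moves to $(\gamma, \omega) = ((2 - \delta)/4, 1)$, and the objective becomes $1 + \delta/2$. The only remaining technical work is standard: Chernoff concentration for $|W'|$, a union bound over the $O(1)$ low-probability events, and careful bookkeeping of the $o(1)$ corrections so that the output clique has size at least $(\alpha - \varepsilon) \log n$ for every fixed $\varepsilon > 0$ whenever $\alpha$ is strictly less than the claimed bound. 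The main (mild) obstacle is making sure the parameters $c$ and $|W'|$ both grow polynomially in $n$ under the chosen $(\gamma, \omega)$, since $\omega - 2\gamma = \delta/2 > 0$ at both optima, which validates the two applications of Matula's theorem.
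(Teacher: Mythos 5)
Your proposal is correct and is essentially the paper's own proof: the same two-round algorithm (probe a small set $C$ internally plus all $C$-to-$W$ edges, then probe the common neighborhood of the clique found in $C$), with the paper's set sizes $n^{\delta/6}, n^{5\delta/6}$ and $n^{1/2-\delta/4}, n$ recovered as the solutions of your linear program in the two regimes. The only difference is presentational — you derive the exponents by optimization where the paper simply states them.
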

\begin{proof}
We first present the algorithm for the case when $q = \Theta(n)$.

{\bf Round 1}: Probe all edges induced by a set $S\subset V$ of size $n^{1/6}$, and all edges connecting $S$ to $T\subset V$ of size $n^{5/6}$ (where $S$ and $T$ are disjoint).

{\bf Round 2}: Let $S' \subset S$ be a clique of largest size within $S$, and let $T' \subset T$ be the set of vertices within $T$ that are neighbors of all vertices of $S'$. Note that $S'$ has size approximately $\frac{1}{3}\log n$ and $T'$ has size approximately $\sqrt{n}$ (w.h.p.). Now probe all edges in $T'$, finding a clique of size approximately $\log n$ (w.h.p.). Together with $S'$, this gives a clique of size approximately $\frac{4}{3} \log n$ (w.h.p.).

For $q = \Theta(n^{\delta})$ the same algorithm applies, with different set sizes. For $1\leq \delta \le \frac{6}{5}$, choose $S$ to have size $n^{\delta/6}$ and $T$ to have size $n^{\delta} / |S| = n^{5\delta/6}$. Then $S'$ has size approximately $\frac{\delta}{3}\log n$, while $T'$ has size approximately $n^{\delta/2}$ (w.h.p.). The largest clique in $T'$, together with $S'$, gives a clique of size approximately $\frac{4\delta}{3} \log n$ (w.h.p.).
For $\frac{6}{5} \le \delta  < 2$, choose $S$ to have size $n^{\frac{1}{2} - \frac{\delta}{4}}$ and $T$ to have size $n \le n^{\delta} / |S|$. Then $S'$ has size approximately $(1 - \frac{\delta}{2})\log n$, while $T'$ has size approximately $n^{\delta/2}$ (w.h.p.). The largest clique in $T'$, together with $S'$, gives a clique of size approximately $(1 + \frac{\delta}{2}) \log n$ (w.h.p.).
\end{proof}

\subsection{Three rounds}\label{sec:three_rounds}

\begin{lemma}\label{lem:three_rounds}
For every $1\leq \delta < 2$ we have that $\alpha_{\star}(\delta,3) \geq 1 + \delta/2$. 
\end{lemma}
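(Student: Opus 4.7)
The plan is to iterate the ``find a clique, then restrict to its common neighborhood'' idea of the 2-round algorithm one more time, fitting two such stages into three rounds. Fix disjoint sets $S_1, S_2 \subset V$ with $|S_1| = \lceil n^{(2-\delta)/4} \rceil$ and $|S_2| = \lfloor n / \log n \rfloor$. In Round 1, probe all edges inside $S_1$. In Round 2, compute the largest clique $S_1' \subseteq S_1$ from Round 1's outcomes and probe all edges between $S_1'$ and $S_2$. In Round 3, let $S_2' \subseteq S_2$ be the set of common neighbors of $S_1'$ within $S_2$, probe all edges inside $S_2'$, compute a largest clique $S_2'' \subseteq S_2'$, and output $S_1' \cup S_2''$. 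This is a clique: $S_1'$ and $S_2''$ are each cliques, and every vertex of $S_2'' \subseteq S_2'$ is adjacent to every vertex of $S_1'$ by the definition of $S_2'$.

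For the analysis I will apply Matula's theorem (Remark~\ref{rem:matula}) twice and verify the probe budgets. Round 1 uses $\binom{|S_1|}{2} = O(n^{(2-\delta)/2}) \le O(n^{1/2})$ probes. Matula's theorem applied to the random graph on $S_1$ gives $|S_1'| = (1+o(1)) \cdot \frac{2-\delta}{2} \log n$ w.h.p. Round 2 then uses $|S_1'| \cdot |S_2| = O(\log n) \cdot \lfloor n/\log n \rfloor = O(n) \le n^\delta$ probes. Conditional on Round 1's outcome (and hence on $S_1'$), each vertex $v \in S_2$ independently belongs to $S_2'$ with probability $2^{-|S_1'|} = n^{-(2-\delta)/2+o(1)}$, so by a Chernoff bound $|S_2'| = (1+o(1)) \cdot n^{\delta/2}/\log n$ w.h.p. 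Round 3 then uses $\binom{|S_2'|}{2} = O(n^\delta / \log^2 n)$ probes, well within budget.

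The one conceptual step I would single out as the heart of the argument is the distributional claim that, conditional on the probes in Rounds 1 and 2, the induced subgraph on $S_2'$ is distributed as $G_{|S_2'|, 1/2}$. This holds because $S_2'$ is measurable with respect to the probed edges (those inside $S_1$ and those between $S_1'$ and $S_2$), whereas the edges inside $S_2' \subseteq S_2$ are disjoint from this collection and therefore remain i.i.d.~$\mathrm{Bernoulli}(1/2)$. A second application of Matula's theorem then gives $|S_2''| = (1+o(1)) \cdot 2\log|S_2'| = (1+o(1)) \cdot \delta \log n$ w.h.p., and combining yields an output clique of size $|S_1'|+|S_2''| = (1 + \delta/2 + o(1))\log n$ w.h.p.; taking the supremum over $\alpha < 1+\delta/2$ gives $\alpha_\star(\delta,3) \ge 1+\delta/2$. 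Everything else is routine: the set sizes are calibrated so that the two applications of Matula combine additively to $(1+\delta/2)\log n$ while each round's probe count stays within the $n^\delta$ budget.
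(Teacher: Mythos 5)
Your proposal is correct and is essentially the same algorithm as the paper's: probe a clique in a small set $S_1$ of size $n^{(1-\delta/2)/2}$, probe its edges to a set of $n/\log n$ fresh vertices, then exhaust the common neighborhood (of size about $n^{\delta/2}$) in the third round. Your write-up just makes explicit the probe-budget accounting and the conditional-independence step that the paper leaves implicit.
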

\begin{proof}
We first present the algorithm for the case when $q = \Theta(n)$.

{\bf Round 1}: Probe all edges induced by a set $S\subset V$ of size $n^{1/4}$.

{\bf Round 2}: Let $S' \subset S$ be a clique of largest size within $S$, and note that $S'$ has size approximately $\frac{1}{2}\log n - 2 \log\log n$ (up to an additive constant, w.h.p.). Let $T$ be a set of $\frac{n}{\log n}$ vertices, with $S$ and $T$ disjoint. Probe all edges between $S'$ and $T$.

{\bf Round 3}: Let $T' \subset T$ be a set of $\sqrt{n}$ vertices that are neighbors of all vertices of $S'$. Such a set $T'$ exists with high probability. Probe all edges in $T'$, finding a clique of size approximately $\log n$ (w.h.p.). Together with $S'$, this gives a clique of size approximately $\frac{3}{2}\log n$ (w.h.p.).

For $q = \Theta(n^{\delta})$ the same algorithm applies, with different set sizes. Choose $S$ to have size $n^{(1-\delta/2)/2}$, in which case $S'$ has size $(1-\delta/2) \log n - 2 \log \log n$ (up to an additive constant, w.h.p.). The set $T'$ will now have size $n^{\delta/2}$, and altogether we obtain a clique of size approximately $(1+\delta/2) \log n$ (w.h.p.). 
\end{proof}

\section{Upper bounds}\label{sec:upper_bounds}

In proving upper bounds on the size of the clique that can be found, we use the following definition.

\begin{definition}
\label{def:cover}
Given a graph $G=(V,E)$, a set $S \subset V$, and a parameter $0 \le \beta \le 1$, we say that $S$ is $\beta$-covered if the subgraph induced by $S$ contains at least $\beta\binom{|S|}{2}$ edges. Given positive integers $n$, $m$, and $k$ (with $k < n$), and a parameter $0 \le \beta \le 1$, we let $N_{n,m,k,\beta}$ denote the maximum number of sets of size $k$ that can be $\beta$-covered in an $n$ vertex graph with $m$ edges.
\end{definition}

The following theorem gives an upper bound on $N_{n,m,k,\beta}$. 

\begin{thm}
\label{thm:cover}
Let $G=(V,E)$ be a graph with $n$ vertices and $m$ edges. Then the number $N_{n,m,k,\beta}$ of sets $S \subset V$ of size $k$ that are $\beta$-covered satisfies:

\begin{itemize}

\item $N_{n,m,k,\beta} \le m^{(1 - \sqrt{1 - \beta})k + 1}n^{(2\sqrt{1 - \beta} - 1)k + 2}$ when $\beta \le \frac{16}{25}$, and 

\item $N_{n,m,k,\beta} \le m^{\sqrt{\beta}k/2 + 1}n^{(1 - \sqrt{\beta})k + 2}$ when $\beta \ge \frac{16}{25}$.

\end{itemize}

Moreover, these upper bounds are tight up to lower order multiplicative terms (when $k$ is much smaller than $m$ and $n$).
\end{thm}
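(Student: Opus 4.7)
The plan is to reduce the problem to a matching-number bound via the classical Erd\H{o}s--Gallai theorem. For each $\beta$-covered $k$-set $S$, the induced subgraph $G[S]$ has at least $\beta\binom{k}{2}$ edges. Erd\H{o}s and Gallai showed that the maximum number of edges on $k$ vertices with matching number at most $\nu$ is $\max\bigl\{\binom{2\nu+1}{2},\,\binom{\nu}{2}+\nu(k-\nu)\bigr\}$, attained respectively by the clique $K_{2\nu+1}$ and by a clique $K_\nu$ completely joined to an independent set of size $k-\nu$. Taking the contrapositive and solving the two inequalities $\binom{2\nu+1}{2}\ge\beta\binom{k}{2}$ and $\binom{\nu}{2}+\nu(k-\nu)\ge\beta\binom{k}{2}$ for $\nu$, I expect every $\beta$-covered $G[S]$ to contain a matching of size at least
\[
r_0 \;=\; \begin{cases}(1-\gamma)\,k & \text{if } \beta \le 16/25,\\ \sqrt{\beta}\,k/2 & \text{if } \beta \ge 16/25,\end{cases}
\]
where $\gamma=\sqrt{1-\beta}$. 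The crossover at $\beta=16/25$ is exactly where the two Erd\H{o}s--Gallai extremals give the same edge count (both equal $\beta\binom{k}{2}$ with $\beta=16/25$).

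Given this matching lower bound, the counting is immediate. For each $\beta$-covered $S$ I would fix canonically an $\lceil r_0\rceil$-edge matching $F_S\subset E(G[S])$ and set $A_S=S\setminus V(F_S)$, so that $|A_S|=k-2\lceil r_0\rceil$ and $S=V(F_S)\cup A_S$. Since the map $S\mapsto(F_S,A_S)$ is injective,
\[
N_{n,m,k,\beta}\;\le\;\binom{m}{\lceil r_0\rceil}\binom{n}{k-2\lceil r_0\rceil}\;\le\;m^{\lceil r_0\rceil}\,n^{k-2\lceil r_0\rceil}.
\]
Substituting the two values of $r_0$ produces exponents $(1-\gamma)k$ and $\sqrt{\beta}\,k/2$ on $m$, and the complementary exponents $(2\gamma-1)k$ and $(1-\sqrt{\beta})k$ on $n$. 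The additive $+1$ and $+2$ corrections in the stated bound are absorbed by the ceiling in $\lceil r_0\rceil$ and the crude bound $\binom{a}{b}\le a^{b}$.

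For the ``moreover'' part (tightness), the matching extremal constructions come straight from the Erd\H{o}s--Gallai extremals, scaled to $n$ vertices and $m$ edges. When $\beta\ge 16/25$, take $G$ to consist of a clique $K_a$ on $a\approx\sqrt{2m}$ vertices together with $n-a$ isolated vertices; a $k$-set is $\beta$-covered iff it meets $K_a$ in at least $\sqrt{\beta}\,k$ vertices, and summing $\binom{a}{i}\binom{n-a}{k-i}$ over $i\ge\sqrt{\beta}\,k$ yields $\approx m^{\sqrt{\beta}k/2}\,n^{(1-\sqrt{\beta})k}$. When $\beta\le 16/25$, take $G$ to be a clique $K_s$ with $s\approx m/n$ completely joined to an independent set of size $n-s$; a $k$-set with $i$ vertices in $K_s$ contains $\binom{i}{2}+i(k-i)$ edges, which is $\ge\beta\binom{k}{2}$ iff $i\ge(1-\gamma)k$, and the analogous binomial sum gives $\approx m^{(1-\gamma)k}\,n^{(2\gamma-1)k}$.

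The heart of the proof is the invocation of Erd\H{o}s--Gallai: one must verify which of the two extremals binds for each range of $\beta$ and solve the corresponding quadratic to obtain $r_0$. Once the matching of size $r_0$ is in hand, the counting step is essentially one line, and the extremal constructions reduce to routine estimation of the dominating binomial term. I do not foresee any difficulty beyond bookkeeping with the ceilings and the asymptotic regime $k\ll m,n$.
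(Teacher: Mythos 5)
Your proof is correct and follows the same skeleton as the paper's: lower-bound the matching number of every $\beta$-covered induced subgraph, encode each such $k$-set by a matching plus leftover vertices to get $N_{n,m,k,\beta}\le\binom{m}{\nu}\binom{n}{k-2\nu}$, and exhibit the clique and complete-split-graph constructions for tightness. The genuine difference is in how the matching lower bound is obtained. The paper proves it from scratch (its Lemma~\ref{lem:matching}): via the Gallai--Edmonds decomposition it shows that the $k$-vertex, $t$-edge graphs minimizing the maximum matching are (up to one edge) cliques plus isolated vertices or complete split graphs, and then computes $\mu(k,\beta)=\min\{\sqrt{\beta}k/2,(1-\sqrt{1-\beta})k\}$. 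You instead cite the Erd\H{o}s--Gallai theorem, whose two extremal families are exactly these graphs, and invert the two edge-count formulas; your identification of the crossover $\beta=16/25$ (where $\sqrt{\beta}/2=1-\sqrt{1-\beta}$) and the resulting $r_0$ agree with the paper's $\mu(k,\beta)$. This buys a much shorter argument at the cost of importing a classical theorem the paper chose to rederive in a self-contained way. Two small bookkeeping points: one should say a word about why using $\lceil r_0\rceil$ rather than the exact minimum only costs the stated $+1$/$+2$ in the exponents (note $m^{\nu}n^{k-2\nu}$ is monotone in $\nu$ since $m\le n^2$); and in the tightness construction for $\beta\le 16/25$ your clique of size $m/n$ is too small to work when $m<kn$ (which is the relevant regime for $m=n^{\delta}$, $k=\Theta(\log n)$) --- there, as in the paper, one should instead take the clique of size $(1-\sqrt{1-\beta})k$ joined to an independent set of size about $m/k$. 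Neither issue affects the substance.
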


We defer the proof of Theorem~\ref{thm:cover} to Section~\ref{sec:cover}. Using Theorem~\ref{thm:cover} we are now ready to prove Theorem~\ref{thm:main}. In fact, we state and prove a quantitative bound on $\alpha_{\star} (\delta, \ell)$, which implies Theorem~\ref{thm:main}. For $0 \leq \beta \leq 1$ and $1 \leq \delta < 2$ define   
\begin{equation}\label{eq:f}
f(\beta,\delta) :=
\begin{cases}
  (2 - \delta)\sqrt{1-\beta} + \delta - 1, &\text{ if } \beta \in \left[ 0, \frac{16}{25} \right], \\
  1 - \left(1 - \frac{\delta}{2} \right) \sqrt{\beta}, &\text{ if } \beta \in \left[ \frac{16}{25}, 1 \right].
\end{cases}
\end{equation}
\begin{thm}\label{thm:main_quantitative} 
We have that 
\begin{equation}\label{eq:upper_bound}
\alpha_{\star} \left( \delta, \ell \right) 
\leq 
\min_{\beta \in \Delta_{\ell-1}} \max_{i \in \left\{ 1, \ldots, \ell \right\}} \frac{2 f\left( \sum_{j=1}^{i} \beta_{j}, \delta \right)}{\sum_{j=i}^{\ell} \beta_{j}},
\end{equation}
where $f$ is defined in~\eqref{eq:f} and $\Delta_{\ell-1}$ denotes the $(\ell-1)$-simplex. 

In particular, there exists a choice of $\beta \in \Delta_{\ell-1}$ in the formula above such that for every $1 \leq \delta < 2$ and constant $\ell$ we have that 
\begin{equation}\label{eq:explicit_bound}
\alpha_{\star} (\delta, \ell) 
\leq 2 - \delta \left( \frac{2-\delta}{2} \right)^{\ell}.
\end{equation}
\end{thm}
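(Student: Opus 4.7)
The plan is to combine a union bound over candidate output cliques with Theorem~\ref{thm:cover} and the principle of deferred decisions. Fix any $\beta = (\beta_1, \ldots, \beta_\ell) \in \Delta_{\ell-1}$ and write $B_i = \sum_{j \le i} \beta_j$ and $S_i = 1 - B_{i-1} = \sum_{j \ge i} \beta_j$ (so $B_0 = 0$ and $B_\ell = S_1 = 1$). Consider an arbitrary $\ell$-round algorithm with at most $n^\delta$ probes per round, producing an output set $C^*$ of size $k = \alpha \log n$; let $H_i$ denote the set of pairs probed by the end of round $i$ (so $|H_i| \le \ell n^\delta$) and $E(H_i) = H_i \cap E(G)$ the subset of probed pairs that are edges. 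For each $k$-subset $C \subseteq V$, set $\gamma_i(C) = |E(H_i) \cap \binom{C}{2}|/\binom{k}{2}$ and $i^*(C) = \min\{i \ge 1 : \gamma_i(C) \ge B_i\}$; when $C$ is a clique, $\gamma_\ell(C) = 1 = B_\ell$ forces $i^*(C) \le \ell$.

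The core inequality of the proof is that for each $i \in \{1, \ldots, \ell\}$,
\[
\Pr\bigl[\text{algorithm succeeds and } i^*(C^*) = i\bigr] \;\le\; 2^{-S_i \binom{k}{2}} \cdot N_{n, \ell n^\delta, k, B_i}.
\]
I would establish this by applying a union bound over $k$-subsets $C$ and then conditioning on the algorithm's state $(H_{i-1}, E(H_{i-1}))$ at the end of round $i - 1$, which determines the round-$i$ probe set $E_i$; by the principle of deferred decisions the bits outside $H_{i-1}$ are independent $\text{Bernoulli}(1/2)$. On the event that $C$ is a clique with $i^*(C) = i$, the algorithm has probed at most $B_{i-1} \binom{k}{2}$ pairs of $\binom{C}{2}$ by the end of round $i - 1$, so at least $S_i \binom{k}{2}$ pairs of $\binom{C}{2}$ remain unprobed at that point and must each turn out to be an edge, yielding the factor $2^{-S_i \binom{k}{2}}$. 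At the same time, $i^*(C) = i$ together with $C$ being a clique forces $|H_i \cap \binom{C}{2}| \ge B_i \binom{k}{2}$, so $C$ is $B_i$-covered in the pair-graph $(V, H_i)$; Theorem~\ref{thm:cover} applied to this graph bounds the number of such $C$ by $N_{n, |H_i|, k, B_i} \le N_{n, \ell n^\delta, k, B_i}$.

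Substituting the bound from Theorem~\ref{thm:cover} gives $N_{n, \ell n^\delta, k, B_i} \le n^{O(1)} \cdot n^{k f(B_i, \delta)}$, with $f$ as in~\eqref{eq:f}. Setting $k = \alpha \log n$ and $\binom{k}{2} \sim \alpha^2 \log^2 n / 2$, the product $2^{-S_i \binom{k}{2}} \cdot n^{k f(B_i, \delta)}$ is $o(1)$ precisely when $\alpha > 2 f(B_i, \delta)/S_i$. Summing over $i$ shows $\Pr[\text{success}] = o(1)$ whenever $\alpha > \max_i 2 f(B_i, \delta)/S_i$, and taking the infimum over $\beta \in \Delta_{\ell-1}$ yields the min-max bound~\eqref{eq:upper_bound}. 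For the explicit bound~\eqref{eq:explicit_bound}, I would then exhibit a particular $\beta \in \Delta_{\ell-1}$ --- a natural candidate coming from equalizing the ratios $2 f(B_i, \delta)/S_i$ across $i$ using the appropriate branch of $f$, which turns $\alpha S_i/2 = f(B_i, \delta)$ into a quadratic recursion in $\sqrt{B_i}$ --- and verify inductively from $B_0 = 0$ that this choice makes $\max_i 2 f(B_i, \delta)/S_i \le 2 - \delta\bigl((2-\delta)/2\bigr)^\ell$.

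The main obstacle I anticipate is the bookkeeping in the deferred-decisions step: one must carefully combine the combinatorial bound on $B_i$-covered subsets of the pair-graph $H_i$ with the probabilistic bound on the unprobed pairs of $\binom{C}{2}$, being careful throughout to distinguish the pair-set $H_i$ from the edge-set $E(H_i)$ so that the conditioning on the round-$(i-1)$ state is not double-counted.
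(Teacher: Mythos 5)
Your argument for the min--max bound \eqref{eq:upper_bound} is essentially the paper's proof: the decomposition by the first round $i$ at which the probed fraction of $\binom{C}{2}$ crosses $B_i$ is exactly the paper's notion of the ``first significant round'' / ``$i$-eligible set,'' and the two factors $N_{n,\cdot,k,B_i}$ (via Theorem~\ref{thm:cover}) and $2^{-S_i\binom{k}{2}}$ (via deferred decisions on the unprobed pairs) are combined in the same way, leading to the same condition $\alpha > 2f(B_i,\delta)/S_i$. Two small remarks. First, the conditioning subtlety you flag at the end disappears if you define $\gamma_i(C)$ using the probed pairs $H_i$ rather than the probed-edges $E(H_i)$: since the round-$i$ probe set is determined by the transcript through round $i-1$, eligibility is then measurable with respect to that transcript, and no answers from round $i$ enter the definition (for a clique the two definitions coincide anyway). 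Second, your claim that $C$ being a clique forces $i^*(C)\le\ell$ tacitly assumes the algorithm probes all of $\binom{C^*}{2}$; this should be made explicit (w.l.o.g.\ add the at most $\binom{k}{2}=O(\log^2 n)$ verification probes to the last round), though the paper's ``there must be at least one significant round'' carries the same tacit assumption.

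The genuine gap is in the second half, \eqref{eq:explicit_bound}. You correctly reduce it to exhibiting a $\beta\in\Delta_{\ell-1}$ with $\max_i 2f(B_i,\delta)/S_i\le 2-\delta\bigl((2-\delta)/2\bigr)^\ell$, but ``equalize the ratios and verify inductively'' is not yet a proof: exact equalization yields an implicit recursion for $\sqrt{1-B_i}$ with no closed form for general $\delta$ (the paper solves it only for $\delta=1$, in Section~\ref{sec:explicit}), and extracting the stated bound from that implicit characterization is precisely the hard step you have not sketched. The paper's route is to \emph{not} equalize exactly: take the geometric choice $\beta_i\propto r^{i-1}$ with $r=2/(2-\delta)$, handle $i=\ell$ directly via $f(1,\delta)=\delta/2$, and for $i<\ell$ linearize with $\sqrt{1-s}\le 1-s/2$, which collapses each ratio to $2-\delta/(r^\ell-r^{i-1})\le 2-\delta r^{-\ell}$. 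Some such explicit choice and estimate is needed to complete your proposal.
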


\begin{proof} 
Let $A$ be a deterministic (w.l.o.g., because the input is randomized) algorithm that takes $\ell$ rounds, for some constant $\ell$, and makes $q = n^{\delta}$ probes in total, where $1 \le \delta < 2$. We set $k = \alpha\log n$, for some $1 < \alpha < 2$, to be determined later as a function of $\ell$ and $\delta$. We will show that $A$ fails (w.h.p.) to find cliques of size $k$.

Let us fix nonnegative $\beta_1, \ldots, \beta_{\ell}$ satisfying $\sum_{i=1}^{\ell} \beta_i = 1$, whose values will later be optimized as a function of $\delta$. Consider $\ell$ identical copies of $A$. On a given input, all copies of $A$ run in an identical fashion and all tentatively produce the same output clique $K$ (of size $k$).
We say that round $i$ is {\em significant} if the number of probes to $K$ in rounds~1 up to~$i-1$ is at most $\sum_{j=1}^{i-1} \beta_j \binom{k}{2}$ and the number of probes to $K$ in rounds~1 up to~$i$ is at least $\sum_{j=1}^{i} \beta_j \binom{k}{2}$. Given $K$ and the sequence of probes, there must be at least one significant round; this can be proven by induction for instance.  Copy $i$ of $A$ outputs $K$ if the first significant round is $i$, and outputs nothing otherwise. This view of multiple copies of $A$ is identical (in its output) to a single copy of $A$. We shall show that each copy succeeds to output a clique of size $k$ with probability less than $1/(2\ell)$, and hence by a union bound the algorithm fails with probability at least $1/2$.

Consider now a single copy of $A$, say $A_i$. A set $S \subset V$ of size $k$ is referred to as an $i$-eligible set if the number of probes to $S$ in rounds~1 up to~$i-1$ is at most $\sum_{j=1}^{i-1} \beta_j \binom{k}{2}$ and the number of probes to $S$ in rounds~1 up to~$i$ is at least $\sum_{j=1}^{i} \beta_j \binom{k}{2}$. By definition, $A_{i}$ is only allowed to output an $i$-eligible set. Note that to determine whether a set $S$ is $i$-eligible, it suffices to see the answers to all probes up to round $i-1$, as this determines the sets of probes also in round $i$ (and $i$-eligibility does not depend on the answers to the probes in round $i$).
Let $E_i$ be the event that at least one of the $i$-eligible sets is indeed a clique. Note that algorithm $A_{i}$ produces no output unless event $E_{i}$ holds, and hence the probability that $A_{i}$ succeeds is bounded above by the probability of $E_{i}$. 
For each $i$-eligible set, after round $i-1$ there are at least $\sum_{j=i}^{\ell} \beta_{j} \binom{k}{2}$ pairs of vertices that have not yet been probed and hence the probability that this set is a clique is at most $2^{-\sum_{j=i}^{\ell} \beta_{j} \binom{k}{2}}$. 
To upper bound the number of $i$-eligible sets, observe that at least $\sum_{j=1}^{i} \beta_{j} \binom{k}{2}$ pairs of vertices of a given $i$-eligible set are probed up to round $i$ (we do not care how these probes are distributed among rounds $1$ to $i$). Therefore the number of $i$-eligible sets is at most $N_{n,q,k,\sum_{j=1}^{i} \beta_{j}}$ (see Definition~\ref{def:cover}). A union bound thus gives us that 
\begin{equation}\label{eq:union_bd}
\P \left( E_{i} \right) \leq N_{n,q,k,\sum_{j=1}^{i} \beta_{j}} 2^{-\sum_{j=i}^{\ell} \beta_{j} \binom{k}{2}}.
\end{equation}
By Theorem~\ref{thm:cover} we have that 
\[
\log N_{n,n^{\delta},k,\beta} \leq \alpha f\left( \beta, \delta \right) \log^{2} n + 4 \log n.
\]
Consequently, using the notation $s_{i} := \sum_{j=1}^{i} \beta_{j}$ and taking logarithms in~\eqref{eq:union_bd}, we obtain that 
\begin{align*}
\log \P \left( E_{i} \right) 
&\leq 
\alpha f \left( s_{i}, \delta \right) \log^{2} n + 4 \log n - \frac{1 - s_{i-1}}{2} \left( \alpha^{2} \log^{2} n - \alpha \log n \right) \\
&= \alpha \left\{ f \left( s_{i}, \delta \right) - \frac{1-s_{i-1}}{2} \alpha \right\} \log^{2} n + \left( \frac{1-s_{i-1}}{2} \alpha + 4 \right) \log n.
\end{align*} 
We thus see that if 
$\alpha > 2 f\left( s_{i}, \delta \right) / (1 - s_{i-1})$ 
then $\p \left( E_{i} \right) \to 0$ as $n \to \infty$. 
If  this holds for every $i \in \left\{1, \ldots, \ell \right\}$, this proves~\eqref{eq:upper_bound}. 

For $\ell = 1$, the expression~\eqref{eq:upper_bound} gives $\alpha_{\star} \left( \delta, 1 \right) \leq \delta$, which is tight (see Lemma~\ref{lem:one_round}). In the following we assume that $\ell \geq 2$. 
For constant $\ell \geq 2$ and fixed $1 \leq \delta < 2$, the expression in~\eqref{eq:upper_bound} gives an optimization problem in $\beta \in \Delta_{\ell-1}$ to solve to obtain an explicit upper bound on $\alpha_{\star}(\delta, \ell)$. This is pursued in more detail in Section~\ref{sec:explicit}; in particular, the optimum is found when $\delta = 1$. Here we simply choose a particular $\beta' \in \Delta_{\ell-1}$ that implies~\eqref{eq:explicit_bound} (and hence also proves Theorem~\ref{thm:main}). 

Specifically, let 
\[ 
\beta_{i}' := r^{i-1} \eps
\]
for $i = 1, \ldots, \ell$, 
where 
\[
r := \frac{2}{2-\delta}
\]
and, since $\sum_{i=1}^{\ell} \beta_{i}' = 1$, we have that $\eps = \left( r - 1 \right) / \left( r^{\ell} - 1 \right)$. Define also, as above, $s_{i}' := \sum_{j=1}^{i} \beta_{j}'$. By~\eqref{eq:upper_bound}, in order to show~\eqref{eq:explicit_bound}, it suffices to show that 
\begin{equation}\label{eq:to_show_beta}
\frac{2 f \left( s_{i}', \delta \right)}{1 - s_{i-1}'} \leq 2 - \delta r^{-\ell}
\end{equation}
for every $i \in \left\{ 1, \ldots, \ell \right\}$.

First, for $i = \ell$ we have that $s_{\ell}' = 1$ and hence $f\left( s_{\ell}',\delta \right) = \delta / 2$. We also have that 
$1 - s_{\ell-1}' = \left( r^{\ell} - r^{\ell-1} \right) / \left( r^{\ell} - 1 \right)$. Hence 
\[
\frac{2 f \left( s_{\ell}', \delta \right)}{1 - s_{\ell-1}'} 
= \delta \frac{r^{\ell}-1}{r^{\ell}-r^{\ell-1}} 
= 2 - \frac{\delta}{r^{\ell}-r^{\ell-1}},
\]
where the second equality follows from the definition of $r$. Now~\eqref{eq:to_show_beta} follows by dropping the $r^{\ell-1}$ term in the right hand side of the display above. 

We now turn to $i \leq \ell - 1$. Note that $s_{i}' \leq s_{\ell-1}' = \left( r^{\ell-1} - 1 \right) / \left( r^{\ell} - 1 \right) < 1/r = (2-\delta)/2 \leq 1/2$ and hence by the definition of $f$ (see~\eqref{eq:f}) we have that 
\[
\frac{2 f \left( s_{i}', \delta \right)}{1 - s_{i-1}'} 
= \frac{2 \left\{ (2-\delta) \sqrt{1-s_{i}'} + \delta - 1 \right\}}{1 - s_{i-1}'}.
\]
Using the bound $\sqrt{1-s_{i}'}\leq 1 - s_{i}'/2$ we obtain that 
\[
\frac{2 f \left( s_{i}', \delta \right)}{1 - s_{i-1}'} 
\leq 
2 - \frac{\left( 2 - \delta \right) s_{i}' - 2 s_{i-1}'}{1 - s_{i-1}'} 
= 2 - \frac{\delta}{r^{\ell} - r^{i - 1}}.
\]
Now~\eqref{eq:to_show_beta} follows by dropping the $r^{i-1}$ term in the display above. 
\end{proof}

\subsection{Bounding $N_{n,m,k,\beta}$: an extremal problem}\label{sec:cover}

In this subsection we prove Theorem~\ref{thm:cover}, which gives essentially the tight upper bound on $N_{n,m,k,\beta}$ (see Definition~\ref{def:cover}), as we will show.

Given integers $k$ and $1 \le t \le \binom{k}{2}$, let $M(k,t)$ denote the minimum, over all $k$-vertex graphs $H$ with $t$ edges, of the size (number of edges) of the maximum matching in $H$. We use the notation $M(k,\beta)$ when $t$ is expressed as $\beta\binom{k}{2}$.

\begin{lemma}
\label{lem:covered}
Using the notation as above, 
$N_{n,m,k,\beta} \le \binom{m}{M(k,\beta)} \binom{n}{k - 2M(k, \beta)}$.
\end{lemma}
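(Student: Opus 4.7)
The plan is to prove the lemma by an injection argument: encode each $\beta$-covered $k$-set $S$ by (a) a matching of size exactly $M(k,\beta)$ drawn from the edge set of $G$, together with (b) the $k-2M(k,\beta)$ vertices of $S$ outside that matching, and then bound the number of such encodings by the product $\binom{m}{M(k,\beta)} \binom{n}{k - 2M(k,\beta)}$.

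Concretely, I would first observe that $M(k,t)$ is monotone non-decreasing in $t$: given any $(t+1)$-edge graph on $k$ vertices, deleting an edge cannot increase the size of a maximum matching, so the minimum over $t$-edge graphs is at most the minimum over $(t+1)$-edge graphs. Consequently, if $S$ is $\beta$-covered, then $G[S]$ has at least $\beta\binom{k}{2}$ edges and hence contains a matching of size at least $M(k,\beta)$. Fix, once and for all, a deterministic rule (e.g.\ lexicographic) that assigns to each $\beta$-covered $S$ a matching $F_S \subseteq E(G[S])$ of size exactly $M(k,\beta)$.

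Next, define the map $\Phi$ sending a $\beta$-covered $k$-set $S$ to the pair $\bigl(F_S,\, S\setminus V(F_S)\bigr)$, where the first coordinate is an $M(k,\beta)$-element subset of $E(G)$ and the second is a $(k-2M(k,\beta))$-element subset of $V(G)$. This map is injective because $S$ is recoverable from its image as $V(F_S)\cup(S\setminus V(F_S))$. The codomain has cardinality at most $\binom{m}{M(k,\beta)}\binom{n}{k-2M(k,\beta)}$ (we do not lose anything by allowing the second coordinate to range over all of $V$ rather than $V\setminus V(F_S)$, since we only need an upper bound), and the claimed inequality follows.

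This argument is essentially a bookkeeping step, so I do not anticipate a real obstacle; the only mild care points are (i) verifying the monotonicity of $M(k,\cdot)$ so that one can legitimately extract a matching of size exactly $M(k,\beta)$ from every $\beta$-covered $S$ (not merely one of size $M(k, |E(G[S])|)$, which could be larger), and (ii) fixing a canonical choice of matching so that $\Phi$ is well-defined and injective. Both are routine.
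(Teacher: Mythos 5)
Your proposal is correct and follows essentially the same encoding argument as the paper: extract a matching of size $M(k,\beta)$ from each $\beta$-covered set, encode the set by its matching edges plus remaining vertices, and count. The monotonicity of $M(k,\cdot)$ and the canonical choice of matching, which you spell out, are left implicit in the paper but are exactly the routine care points you identify.
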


\begin{proof}
For a set $S$ of size $k$ to be $\beta$-covered, its induced subgraph must have a matching of size $M(k, \beta)$. We encode $S$ by its matching edges followed by the remaining vertices in $S$. Given that $G$ has $m$ edges, there are at most $\binom{m}{M(k, \beta)}$ ways of encoding the matching edges, and then at most $\binom{n}{k - 2M(k, \beta)}$ ways of choosing the remaining vertices.
\end{proof}

\begin{lemma}
\label{lem:matching}
Define 
\begin{equation}\label{eq:mu}
\mu(k, \beta) := \min\left\{\frac{\sqrt{\beta}}{2}, \left( 1 - \sqrt{1 - \beta} \right)\right\} k.
\end{equation}
We then have that 
\[
\left\lfloor \mu(k,\beta) \right\rfloor 
\leq M(k,\beta) 
\leq \left\lfloor \mu(k,\beta) \right\rfloor + 1.
\]
\end{lemma}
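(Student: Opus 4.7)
The plan is to derive the claim directly from the Erdős--Gallai theorem on matchings, which states that the maximum number of edges in a $k$-vertex graph with no matching of size $s+1$ equals $g(s) := \max\left\{\binom{2s+1}{2},\, \binom{s}{2} + s(k-s)\right\}$ (provided $2s+1 \le k$; the other regime is handled trivially since then $K_k$ itself forces a matching of size $\lfloor k/2 \rfloor$). Consequently, $M(k,\beta)$ equals the smallest nonnegative integer $M$ with $g(M) \ge \beta\binom{k}{2}$, and the task reduces to solving this integer threshold question in closed form.

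First I would handle the upper bound $M(k,\beta) \le \lfloor \mu(k,\beta) \rfloor + 1$ via two explicit constructions. The \emph{clique construction} places a $K_{2M+1}$ on $2M+1$ vertices with the remaining vertices isolated; its matching number is exactly $M$, and solving $\binom{2M+1}{2}\ge\beta\binom{k}{2}$ in the smallest integer $M$ gives $M \approx \sqrt{\beta}k/2$. The \emph{vertex-cover construction} takes a set $C$ of size $M$ and all edges incident to $C$; its matching number is at most $M$, and solving $\binom{M}{2}+M(k-M)\ge\beta\binom{k}{2}$ yields $M \approx (1-\sqrt{1-\beta})k$. In either case, if the extremal graph carries strictly more than $\beta\binom{k}{2}$ edges, I can delete excess edges (which can only decrease the matching number) to land at exactly $\beta\binom{k}{2}$ edges. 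Taking the smaller of the two integer thresholds and accounting for the small discrepancy between the exact quadratic roots and the formulas $\sqrt{\beta}k/2$, $(1-\sqrt{1-\beta})k$ gives the claimed upper bound.

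For the matching lower bound $M(k,\beta) \ge \lfloor \mu(k,\beta)\rfloor$, I would apply Erdős--Gallai in contrapositive form: any graph with matching number at most $\lfloor\mu\rfloor-1$ has at most $g(\lfloor\mu\rfloor-1)$ edges, and a direct quadratic computation shows this is strictly less than $\beta\binom{k}{2}$. Concretely, one verifies that $\binom{2s+1}{2}<\beta\binom{k}{2}$ whenever $s < \sqrt{\beta}k/2 - O(1)$, and that $\binom{s}{2}+s(k-s)<\beta\binom{k}{2}$ whenever $s < (1-\sqrt{1-\beta})k - O(1)$; both amount to plugging the bound on $s$ into the relevant quadratic. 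Alongside this, I would verify that the two thresholds $\sqrt{\beta}k/2$ and $(1-\sqrt{1-\beta})k$ cross exactly at $\beta=16/25$ by squaring the equation $\sqrt{\beta}/2 = 1-\sqrt{1-\beta}$ and factoring; for $\beta \le 16/25$ the vertex-cover construction is the binding one, and for $\beta \ge 16/25$ the clique construction is.

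The main obstacle is the $\pm 1$ bookkeeping: the lemma asserts that $M(k,\beta)$ lies within one of $\lfloor\mu(k,\beta)\rfloor$, but the true thresholds are roots of quadratics that differ from the clean formulas $\sqrt{\beta}k/2$ and $(1-\sqrt{1-\beta})k$ by $O(1)$ lower-order terms coming from the linear coefficients. The delicate part is showing this slop never eats into more than one integer so that $\lceil\min(s_1^\ast, s_2^\ast)\rceil$ equals $\lfloor\mu\rfloor$ or $\lfloor\mu\rfloor+1$; this requires pairing each construction with its matching Erdős--Gallai estimate to certify the off-by-one gap, rather than bounding each threshold in isolation.
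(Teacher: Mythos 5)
Your proposal is correct, and it reaches the same two extremal families (clique plus isolated vertices, and complete split graph / vertex-cover graph) and the same quadratic bookkeeping as the paper, but it gets there by a genuinely different route for the hard structural step. The paper does not invoke the Erd\H{o}s--Gallai matching theorem; instead it \emph{reproves} the relevant extremal characterization from scratch, via a local edge-addition argument on Gallai--Edmonds decompositions, showing that any graph minimizing the matching number for a given edge count can be pushed (without changing $c-s$) toward a clique-plus-isolated-vertices graph or a complete split graph. Your citation of Erd\H{o}s--Gallai --- that a $k$-vertex graph with $\nu(G)\le s$ has at most $\max\left\{\binom{2s+1}{2},\,\binom{s}{2}+s(k-s)\right\}$ edges --- is exactly the content of that structural claim packaged as a known theorem, and combined with the edge-deletion observation it correctly identifies $M(k,t)$ as the least $s$ with $g(s)\ge t$. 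What your route buys is brevity and a clean reduction to an integer threshold problem; what the paper's route buys is a self-contained argument (modulo the Gallai--Edmonds decomposition itself) that does not depend on quoting the extremal theorem. Your worry about the $\pm 1$ slop is legitimate but resolves cleanly: each exact quadratic root $s_i^{\ast}$ satisfies $\mu_i - \tfrac12 \le s_i^{\ast} \le \mu_i$ for its clean formula $\mu_i \in \left\{\tfrac{\sqrt{\beta}}{2}k,\ (1-\sqrt{1-\beta})k\right\}$, so $\lceil s_i^{\ast}\rceil \in \{\lfloor\mu_i\rfloor, \lfloor\mu_i\rfloor+1\}$ for each $i$ separately, and taking the minimum over $i$ preserves this two-value window; you do not actually need to pair constructions with estimates more delicately than that. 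The only point to make explicit in a full write-up is the degenerate regime $2s+1>k$ (which you flag) and the fact that both branches of $g$ are increasing in $s$, so that being below the root really does force fewer than $t$ edges.
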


\begin{proof}
Let $H$ be an arbitrary graph on $k$ vertices. It is well known (see, e.g.,~\cite{LP2009matching}) that if the maximum matching in $H$ has size $M$, then $H$ has a Gallai-Edmonds (GE) decomposition satisfying $c - s = k - 2M$, where $c$ is the number of odd components $C_1, \ldots, C_c$ in the GE decomposition, $s$ is the size of the separator set $S$, $r$ is the size of the set $R$ of remaining vertices, and edges exiting an odd component can only be connected to $S$. Every maximum matching matches all of $R$ to itself, matches each vertex of $S$ to a different odd component, and leaves exactly one unmatched vertex in every odd component that has no vertex matched to $S$. The value of a GE-decomposition is $c - s$, and an optimal GE-decomposition is one that maximizes  $c - s$. If $c > 0$ in a GE-decomposition, then necessarily $c > s$.

Let $H$ be a graph with $k$ vertices and $t$ edges for which the size of the maximum matching is $M(k,t)$. We make a {\em structural claim} that (at least) one of the following holds:

\begin{enumerate}

\item The optimal GE-decomposition for $H$ is a clique plus a set of isolated vertices. If the clique size is even it serves as $R$ in the GE-decomposition, whereas if its size is odd it serves as an odd component. In any case, every isolated vertex serves as an odd component, and $S$ is empty.

\item The optimal GE-decomposition for $H$ is a (complete) split graph, namely, a clique that serves as $S$, odd components that are singleton vertices, and edges between each odd component and all of $S$. (Note that for the split graph to be a GE-decomposition it must be that the independent set is larger than the clique, because $c - s$ needs to be positive.)

\item $M(k,t) = M(k,t+1)$.

\end{enumerate}

To prove the claim, we need to show that if the optimal GE-decomposition for $H$ is neither a clique nor a split graph, then we can find a graph with $t+1$ edges that has a GE-decomposition with the same value as that for $H$.

W.l.o.g.\ we may assume that $R \cup S$ is a clique, as otherwise we can add an edge to $R \cup S$ without decreasing $c - s$. Thereafter, if there are no odd components the GE-decomposition is a clique and we are done. Hence we may assume that there are odd components and $c > s$. As with the argument that $R \cup S$ is a clique, if we cannot add edges to $H$ while preserving $c - s$, it must be that each odd component is a clique, and every vertex in every odd component is connected to every vertex in $S$ (if $S$ exists). Moreover, $R$ is necessarily empty, as otherwise we can merge  it with an odd component (the component remains odd because $R$ is necessarily even), and completing that odd component to a clique we gain edges without changing $c-s$.

There remain several cases to consider:

\begin{enumerate}

\item There is exactly one odd component, $C_1$. Then $S$ is empty, and $R$ was already previously assumed to be empty. Hence $H$ is just the odd clique $C_1$, and so is its GE-decomposition.

\item There are (at least) two odd components that are not singletons, say $C_1$ and $C_2$. Then move all but one vertex from $C_1$ to $C_2$ (and remove the edges created between $C_1$ and $C_2$, and make $C_2$ into a clique). The number of edges increases without decreasing $c - s$.

\item $S$ is empty. From the previous two cases we may assume that there are at least two odd components, and at most one of them (say $C_1$) is a clique. As $R$ is empty as well, then the GE-decomposition is a clique ($C_1$).

\item $S$ is nonempty (and $R$ is empty, as argued above), there are at least $s + 1$ odd components, and exactly one odd component (say $C_1$) is not a singleton (if $C_1$ is a singleton then we have a split graph, as desired).

    \begin{enumerate}

    \item $|C_1| \ge c$. Merge $S$ and $s$ singletons into $C_1$, add edges to the new $C'_1$ to make it a clique, and disconnect the remaining singletons from $S$. The GE-decomposition becomes a clique $C'_1$, and the value $c-s$ does not change. The number of edges gained is $s |C_1|$, and lost is $(c-1-s)s$, so we strictly gained edges, as desired.

    \item $c > |C_1|$. Increase both $c$ and $s$ by one by making one vertex $v$ of $C_1$ a new singleton component $C'_1$, and moving one vertex $u$ from $C_1$ to $S$. After updating the edges, we lose $|C_1| - 2$ edges of $v$, but gain $c - 1$ new edges to $u$, so the number of edges increases.

    \end{enumerate}

 \end{enumerate}

Having proved our structural claim, it remains to show the quantitative bounds. For the lower bound, it suffices to check the size of the maximum matching in a clique and in a split graph, each with $\beta\binom{k}{2}$ edges, since the case that $M(k,t) = M(k,t+1)$ is handled by considering $t+1$, which only increases the computed bounds. For the upper bound it suffices to check the size of the maximum matching in graphs that are nearly a clique and nearly a split graph. 

In a clique of size $K$ the size of the maximum matching is $\left\lfloor \frac{K}{2} \right\rfloor$. Since the number of edges in the clique is $\binom{K}{2} = \beta \binom{k}{2}$, we must have $K \geq \sqrt{\beta} k$, which implies that the size of the maximum matching is at least $\left\lfloor \frac{\sqrt{\beta}}{2} k \right \rfloor$. 

For the matching upper bound, let $K$ be such that $\binom{K}{2} < \beta \binom{k}{2} \leq \binom{K+1}{2}$. The first inequality implies that $K < \sqrt{\beta}k+1$. Now define a graph that is a clique of size $K$ with an additional vertex that is connected to $\beta \binom{k}{2} - \binom{K}{2}$ vertices of the clique, and $k-(K+1)$ singleton vertices. The size of the maximum matching in this graph is $\left\lfloor \frac{K+1}{2} \right\rfloor$ and by the inequality above we have that $\left\lfloor \frac{K+1}{2} \right\rfloor \leq \left\lfloor \frac{\sqrt{\beta}}{2} k \right \rfloor + 1$. 

In a (complete) split graph, let $c$ denote the size of the independent set. The number of non-edges is $\binom{c}{2} = \left( 1 - \beta \right) \binom{k}{2}$ and hence $c \leq \left\lceil \sqrt{1-\beta} k \right\rceil$. The size of the maximum matching is $k - c$, since in the maximum matching every vertex of the clique is matched to a vertex in the independent set, and we have that 
$k - c \geq \left\lfloor \left( 1 - \sqrt{1-\beta} \right) k \right\rfloor$. 

For the matching upper bound, let $c$ be such that $\binom{c}{2} \leq \left( 1 - \beta \right) \binom{k}{2} < \binom{c+1}{2}$. The second inequality implies that $c \geq \left\lceil \sqrt{1-\beta} k \right\rceil - 1$. We define a graph on $k$ vertices as follows. Start with a split graph on $k$ vertices with an independent set of size $c$. Now take a vertex $v$ from the clique of the split graph and remove $\left( 1 - \beta \right) \binom{k}{2} - \binom{c}{2}$ edges connecting $v$ to vertices of the independent set. The resulting graph has $\beta \binom{k}{2}$ edges and its maximum matching has size $k-c$. By the inequality above we have that $k-c \leq \left\lfloor \left( 1 - \sqrt{1-\beta} \right) k \right\rfloor + 1$.
\end{proof}

\begin{proof}[of Theorem~\ref{thm:cover}] 
By Lemma~\ref{lem:covered} and a simple bound on binomial coefficients we obtain that 
\begin{equation*}\label{eq:N_bound}
N_{n,m,k,\beta} 
\leq \binom{m}{M(k,\beta)} \binom{n}{k - 2M(k, \beta)} 
\leq m^{M(k,\beta)} n^{k-2M(k,\beta)}.
\end{equation*}
Using Lemma~\ref{lem:matching} we thus obtain that 
\[
N_{n,m,k,\beta} 
\leq m^{\mu(k,\beta)+1} n^{k-2\mu(k,\beta) + 2},
\]
where recall the definition of $\mu(k,\beta)$ from~\eqref{eq:mu}. Theorem~\ref{thm:cover} now follows by observing that 
$\mu(k,\beta) = \left( 1-\sqrt{1-\beta} \right) k$ when $\beta \leq \frac{16}{25}$ and 
$\mu(k,\beta) = \frac{\sqrt{\beta}}{2} k$ 
when $\beta \geq \frac{16}{25}$. 

This bound is tight (up to factors of $k^{\Theta(k)}$) as can be seen by the following examples. 
First, let $\beta \leq \frac{16}{25}$. Suppose first that $m < kn$. Consider $m$ edges that form a split graph with a clique $K$ of size $(1 - \sqrt{1 - \beta})k$ joined to an independent set $I$ of size roughly $\frac{m}{k}$. Any choice of $\sqrt{1 - \beta}k$ vertices from $I$ completes together with $K$ a split graph with $\beta\binom{k}{2}$ edges. If $m > kn$, then make $K$ of size $m/n$ and $I$ of size $n - |K|$. Any choice of $\sqrt{1-\beta} k$ vertices from $I$ and $(1-\sqrt{1-\beta}) k$ vertices from $K$ forms a split graph with $\beta \binom{k}{2}$ edges.

Now let $\beta \geq \frac{16}{25}$. Consider $m$ edges that form a clique of size $\sqrt{2m}$. Any choice of $\sqrt{\beta}k$ vertices from the clique and $(1 - \sqrt{\beta})k$ vertices from the rest of the graph gives a subgraph with $\beta\binom{k}{2}$ edges.
\end{proof} 


\subsection{Explicit upper bounds for $\delta=1$}\label{sec:explicit}

The expression~\eqref{eq:upper_bound} in Theorem~\ref{thm:main_quantitative} gives an optimization problem to compute an upper bound on $\alpha_{\star} \left( \delta, \ell \right)$. As mentioned in the proof of Theorem~\ref{thm:main_quantitative}, for $\ell = 1$ this gives $\alpha_{\star} \left( \delta, 1 \right) \leq \delta$, which is tight. Here we investigate the optimization problem of~\eqref{eq:upper_bound} for other values of $\ell$. In particular, we solve this optimization problem for every $\ell$ when $\delta = 1$.  

For $\ell = 2$, the expression~\eqref{eq:upper_bound} gives 
\[
\alpha_{\star} \left( \delta, 2 \right) 
\leq \min_{\beta \in \left[ 0, 1 \right]} \max \left\{ 2 f \left( \beta, \delta \right) , \frac{\delta}{1-\beta} \right\}.
\]
For the two expressions in the display above to be less than $2$, we must have $\delta / (1-\beta) < 2$, or equivalently, $\beta < 1 - \delta / 2 \leq 1/2$. Hence recalling the definition of $f$ (see~\eqref{eq:f}) we obtain that 
\[
\alpha_{\star} \left( \delta, 2 \right) 
\leq \min_{\beta \in \left[ 0, \frac{1}{2} \right]} \max \left\{ 2 \left( \left( 2 - \delta \right) \sqrt{1-\beta} + \delta - 1 \right), \frac{\delta}{1-\beta} \right\}.
\]
Observe that the first expression in the display above is decreasing in $\beta$, while the second expression is increasing in $\beta$. Therefore the unique minimizer $\beta' = \beta' \left( \delta \right)$ satisfies 
\[
2 \left( \left( 2 - \delta \right) \sqrt{1-\beta'} + \delta - 1 \right) = \frac{\delta}{1 - \beta'}
\]
and we have that 
\[
\alpha_{\star} \left( \delta, 2 \right) \leq \frac{\delta}{1-\beta'}.
\]
For $\delta = 1$ we obtain that $\beta'(1) = 1 - 2^{-2/3}$ and thus $\alpha_{\star} \left( 1, 2 \right) \leq 2^{2/3} < 1.588$. 

Many of the observations made for the case of $\ell = 2$ above also apply for $\ell \geq 3$. We may write the expression~\eqref{eq:upper_bound} as follows: 
\[
\alpha_{\star} \left( \delta, \ell \right) 
\leq \min_{\beta \in \Delta_{\ell-1}} \max \left\{ \max_{i \in \left\{ 1, \ldots, \ell-1 \right\}} \frac{2 f \left( s_{i}, \delta \right)}{1-s_{i-1}}, \frac{\delta}{\beta_{\ell}} \right\},
\]
where $s_{i} = \sum_{j=1}^{i} \beta_{j}$ as before. If $\beta' = \beta' \left( \delta \right)$ denotes the optimizer then we must have $\beta_{\ell}' > \delta / 2 \geq 1/2$. This implies that $s_{i}' \leq s_{\ell-1}' \leq 1/2$ for every $i \leq \ell-1$, where $s_{i}' = \sum_{j=1}^{i} \beta_{j}'$. Hence recalling the definition of $f$ again we obtain that 
\[
\alpha_{\star} \left( \delta, \ell \right) 
\leq \min_{\beta \in \Delta_{\ell-1}, \beta_{\ell} \geq \frac{\delta}{2}} \max \left\{ \max_{i \in \left\{ 1, \ldots, \ell-1 \right\}} \frac{2 \left( \left( 2 - \delta \right) \sqrt{1-s_{i}} + \delta - 1 \right)}{1-s_{i-1}}, \frac{\delta}{\beta_{\ell}} \right\}. 
\]
This simplifies when $\delta = 1$: 
\[
\alpha_{\star} \left( 1, \ell \right) 
\leq \min_{\beta \in \Delta_{\ell-1}, \beta_{\ell} \geq \frac{1}{2}} \max \left\{ \max_{i \in \left\{ 1, \ldots, \ell-1 \right\}} \frac{2 \sqrt{1-s_{i}}}{1-s_{i-1}}, \frac{1}{\beta_{\ell}} \right\}. 
\] 
The optimizer $\beta' = \beta' (1)$ is such that all $\ell$ expressions on the right hand side of the display above are equal. That is, we must have 
\[
\frac{2 \sqrt{1-s_{i}'}}{1-s_{i-1}'} = \frac{1}{\beta_{\ell}'}
\]
for every $i \in \left\{ 1, \ldots, \ell - 1 \right\}$. This set of equations can be solved and we obtain the following optimizer: 
\begin{align*}
\beta_{1}' &= 1 - 2^{-\frac{2}{2^{\ell}-1}}, \\
\beta_{i}' &= 2^{-\frac{2^{i}-2}{2^{\ell}-1}} - 2^{-\frac{2^{i+1}-2}{2^{\ell}-1}} \qquad \text{ for } i \in \left\{ 2, \ldots, \ell -1 \right\}, \\
\beta_{\ell}' &= 2^{-\frac{2^{\ell}-2}{2^{\ell}-1}}. 
\end{align*}
The optimum is therefore 
$1/\beta_{\ell}' = 2^{\frac{2^{\ell}-2}{2^{\ell}-1}}$. In conclusion, we have proved the following corollary of Theorem~\ref{thm:main_quantitative}. 

\begin{cor}\label{cor:explicit_linear_probes}
We have that 
\[
\alpha_{\star} \left( 1, \ell \right) \leq 2^{1-\frac{1}{2^{\ell}-1}}.
\]
\end{cor}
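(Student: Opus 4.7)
The corollary is the $\delta = 1$ specialization of the optimization problem displayed immediately above its statement, so it suffices to exhibit a concrete $\beta' \in \Delta_{\ell-1}$ with $\beta'_\ell \ge 1/2$ at which all $\ell$ expressions inside the outer max coincide at the value $2^{1 - 1/(2^\ell-1)}$. This pins down a valid upper bound on the min-max without any need to separately certify that $\beta'$ is the minimizer.

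The key reduction is to introduce $a_i := 1 - s_i$, so that $a_0 = 1$ and $a_{\ell-1} = \beta_\ell$, and to enforce that
\[
\frac{2\sqrt{a_i}}{a_{i-1}} = \frac{1}{\beta_\ell} \qquad \text{for each } i \in \{1, \ldots, \ell - 1\}.
\]
Squaring gives the clean recursion $a_i = a_{i-1}^2/(4\beta_\ell^2)$, and a short induction yields $a_i = (2\beta_\ell)^{-2(2^i - 1)}$. Imposing the boundary condition $a_{\ell-1} = \beta_\ell$ and substituting $x := 2\beta_\ell$ collapses everything to $x^{2^\ell-1} = 2$, whence $\beta_\ell = 2^{1/(2^\ell-1) - 1}$ and $1/\beta_\ell = 2^{1 - 1/(2^\ell-1)}$, which is the claimed value.

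With $\beta_\ell$ fixed, the remaining coordinates of $\beta'$ are recovered by telescoping: $\beta'_1 = 1 - a_1$, $\beta'_i = a_{i-1} - a_i$ for $2 \le i \le \ell - 1$, and $\beta'_\ell = a_{\ell-1}$, reproducing the explicit formulas displayed in the excerpt. The remaining checks are routine: the sequence $(a_i)$ is strictly decreasing so every $\beta'_i \ge 0$; $\sum_i \beta'_i = 1$ by telescoping; $\beta'_\ell \ge 1/2$ because the exponent $(2^\ell - 2)/(2^\ell - 1)$ is strictly less than $1$; and $s_i \le 1 - \beta'_\ell \le 1/2 \le 16/25$ for every $i \le \ell - 1$. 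This last inequality is the one point that actually deserves care, as it justifies using the branch $f(\beta, 1) = \sqrt{1-\beta}$ in the simplified display preceding the corollary; the rest of the argument is pure algebra. Once these checks are in place, substituting $\beta'$ into the min-max expression shows that each of the $\ell$ entries equals $1/\beta'_\ell = 2^{1 - 1/(2^\ell-1)}$, proving the bound.
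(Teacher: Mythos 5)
Your proposal is correct and follows essentially the same route as the paper: you solve the same system of equations $2\sqrt{1-s_i}/(1-s_{i-1}) = 1/\beta_\ell$ arising from the $\delta=1$ specialization of Theorem~\ref{thm:main_quantitative}, arrive at the same $\beta'$, and your $a_i$-recursion simply makes explicit the computation the paper states ``can be solved.'' The one (welcome) refinement is that you only use $\beta'$ as a feasible point rather than certifying it as the minimizer, which indeed suffices for the claimed upper bound.
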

Numerically, the bound above gives the following for small $\ell$: 
$\alpha_{\star} \left( 1, 2 \right) \leq 2^{2/3} < 1.588$, 
$\alpha_{\star} \left( 1, 3 \right) \leq 2^{6/7} < 1.812$, 
$\alpha_{\star} \left( 1, 4 \right) \leq 2^{14/15} < 1.910$, 
and $\alpha_{\star} \left( 1, 5 \right) \leq 2^{30/31} < 1.956$. 
These upper bounds should be compared to the lower bounds $\alpha_{\star} \left( 1, 2 \right) \geq 4/3$ and $\alpha_{\star} \left( 1, \ell \right) \geq \alpha_{\star} \left( 1, 3 \right) \geq 3/2$ for $\ell \geq 3$ (see Section~\ref{sec:algorithms}). 

\section*{Acknowledgements}

The problem considered here was proposed by David Gamarnik at the American Institute of Mathematics workshop ``{\em Phase transitions in randomized computational problems}" in June 2017~\cite{aimpl}. It arose from a discussion with Madhu Sudan, whose contribution to the inception of the problem is gratefully acknowledged. We thank AIM and the organizers, Amir Dembo, Jian Ding, and Nike Sun, for putting together the workshop. We also thank Jane Gao for initial discussions.


\bibliographystyle{abbrv}
\bibliography{bib}

\end{document}